\newcommand{\iy}{\ensuremath{\infty}}
\newcommand{\R}{\ensuremath{\mathbf{R}}}
\newcommand{\N}{\ensuremath{\mathbf{N}}}
\newcommand{\E}{\ensuremath{\mathbf{E}}}
\renewcommand{\P}{\ensuremath{\mathbf{P}}}
\newcommand{\e}{\varepsilon}
\newcommand{\st}{\textnormal{ s.t. }}
\renewcommand{\leq}{\leqslant}
\renewcommand{\geq}{\geqslant}
\DeclareMathOperator{\Tr}{tr}
\newcommand{\coo}{c_{00}}
\newtheorem{theo}{Theorem}[section]
\newtheorem{pr}[theo]{Proposition}
\newtheorem{lemma}[theo]{Lemma}
\newtheorem{rk}[theo]{Remark}
\newtheorem*{theo*}{Theorem}
\title{The multiplicative property characterizes $\ell_p$ and $L_p$ norms}
\author{Guillaume Aubrun}
\address{Universit{\'e} de Lyon, Universit{\'e} Lyon 1, CNRS, UMR 5208 Institut Camille Jordan, Batiment du Doyen Jean Braconnier, 43, boulevard du 11 novembre 1918, F - 69622 Villeurbanne Cedex, France}
\email{aubrun@math.univ-lyon1.fr}
\author{Ion Nechita}
\address{Laboratoire de Physique Th{\'e}orique du CNRS, IRSAMC, Universit{\'e} de Toulouse, UPS, F-31062 Toulouse, France}
\email{nechita@irsamc.ups-tlse.fr}
\begin{document}

\begin{abstract}We show that $\ell_p$ norms are characterized as the unique norms which are both invariant under coordinate permutation and multiplicative with respect to tensor products. Similarly, the $L_p$ norms are the unique rearrangement-invariant norms on a probability space such that $\|X Y\|=\|X\|\cdot\|Y\|$ for every pair $X,Y$ of independent random variables. Our proof relies on Cram\'er's large deviation theorem.
\end{abstract}

\maketitle

\section{Introduction}

The $\ell_p$ and $L_p$ spaces are among the most important examples of Banach spaces and have been widely investigated (see e.g. \cite{handbook-ellp} for a survey). In this note, we show a new characterization of the $\ell_p/L_p$ norms by a simple algebraic identity: the {\itshape multiplicative property}. In the case of $\ell_p$ norms, this property reads as $\|x \otimes y\|=\|x\| \cdot \|y\|$ for every (finite) sequences $x,y$. In the case of $L_p$ norms, it becomes $\|XY\|=\|X\|\cdot\|Y\|$ whenever $X,Y$ are independent random variables.

Inspiration for the present note comes from quantum information theory, where the multiplicative property of the commutative and noncommutative $\ell_p$ norms plays an important role; see \cite{kuperberg,an2} and references therein.

\subsection{Discrete case: characterization of $\ell_p$ norms}

Let $\coo$ be the space of finitely supported real sequences. The coordinates of an element $x \in \coo$ are denoted $(x_i)_{i \in \N^*}$.
If $x,y \in \coo$, we define $x \otimes y$ to be double-indexed sequence $(x_iy_j)_{(i,j) \in \N^* \times \N^*}$. Throughout the paper, we consider $x \otimes y$ as an element of $\coo$ via some fixed bijective map between $\N^*$ and $\N^* \times \N^*$.

We consider a norm $\|\cdot\|$ on $\coo$ satisfying the following conditions
\begin{enumerate}
 \item {\bfseries (permutation-invariance)} If $x,y \in \coo$ are equal up to permutation of their coordinates, then $\|x\|=\|y\|$.
 \item {\bfseries (multiplicativity)} If $x,y \in \coo$, then $\|x \otimes y\|=\|x\| \cdot \|y\|$.
\end{enumerate}

Because of the invariance under permutation, the specific choice of a bijection between $\N^*$ and $\N^* \times \N^*$ is irrelevant. Examples
of a norm satisfying both conditions are given by $\ell_p$ norms, defined by
\[ \|x\|_p = \left( \sum_{i \in \N^*} |x_i|^p \right)^{1/p} \ \ \textnormal{ if }1 \leq p < +\iy \ ; \ \ \ \ \
 \|x\|_{\iy} = \sup_{i \in \N^*} |x_i| .\]

The next theorem shows that there are no other examples.

\begin{theo}\label{thm:main}
If a norm $\|\cdot\|$ on $\coo$ is permutation-invariant and multiplicative, then it coincides with $\|\cdot\|_p$ for some $p \in [1,+\iy]$.
\end{theo}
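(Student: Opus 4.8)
The plan is to reduce everything to the values of the norm on the ``flat'' vectors $u_n = e_1 + \cdots + e_n$, and then to use a large-deviation (method-of-types) estimate to recover the norm of a general vector from these.

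First I would record a few normalizations. Since $e_1 \otimes e_1$ equals $e_1$ up to permutation, multiplicativity forces $\|e_1\| = \|e_1\|^2$, hence $\|e_i\| = 1$ for all $i$; the triangle inequality then gives $\|x\| \le \|x\|_1$ for every $x$. The crucial preliminary step is to show that the norm is \emph{absolute}, i.e. $\|x\| = \||x|\|$ where $|x| = (|x_i|)_i$. To see this, tensor $x$ with $s = e_1 - e_2 = (1,-1)$: the vector $s \otimes x$ has coordinates $\{\pm x_i\}$, whose multiset is invariant under a global sign change, so $s \otimes x$ and $s \otimes |x|$ agree up to permutation. Multiplicativity and $\|s\| \ne 0$ then yield $\|x\| = \||x|\|$. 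Being absolute and permutation-invariant, the norm is automatically monotone: if $0 \le y \le x$ coordinatewise then $\|y\| \le \|x\|$ (scaling one coordinate by $t \in [0,1]$ writes the result as a convex combination of $x$ and a sign-flip of $x$, both of norm $\|x\|$). In particular $\|x\|_\iy \le \|x\| \le \|x\|_1$.

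Next I would analyze the flat vectors. Set $f(n) = \|u_n\|$. Since $u_m \otimes u_n$ equals $u_{mn}$ up to permutation, $f(mn) = f(m)f(n)$, and the inequalities above give $1 \le f(n) \le n$; moreover $u_n \le u_{n+1}$, so monotonicity makes $f$ non-decreasing. A non-decreasing completely multiplicative function is a power: comparing $f(2^a)$ with $f(n^b)$ for $2^a \le n^b < 2^{a+1}$ and letting $b \to \iy$ gives $\log f(n)/\log n \equiv \alpha$ with $\alpha = \log_2 f(2) \in [0,1]$. Writing $\alpha = 1/p$ this reads $f(n) = n^{1/p} = \|u_n\|_p$ for a unique $p \in [1,+\iy]$.

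The heart of the argument is then to upgrade this from flat vectors to arbitrary ones using $\|x^{\otimes k}\| = \|x\|^k$. For nonnegative $x = (x_1,\ldots,x_d)$, group the $d^k$ coordinates of $x^{\otimes k}$ by their type $\tau = (k_1,\ldots,k_d)$ with $\sum_a k_a = k$: there are $\binom{k}{\tau}$ coordinates of type $\tau$, each equal to $v_\tau = \prod_a x_a^{k_a}$. Each type contributes a flat block of norm $v_\tau f\big(\binom{k}{\tau}\big) = \big(\binom{k}{\tau}\prod_a (x_a^p)^{k_a}\big)^{1/p}$, and by monotonicity and the triangle inequality
\[ \max_\tau v_\tau f\Big(\tbinom{k}{\tau}\Big) \ \le\ \|x^{\otimes k}\| \ \le\ N_k \max_\tau v_\tau f\Big(\tbinom{k}{\tau}\Big), \]
where $N_k = \binom{k+d-1}{d-1}$ is the (polynomial) number of types. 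By the multinomial theorem $\sum_\tau \binom{k}{\tau}\prod_a (x_a^p)^{k_a} = \big(\sum_a x_a^p\big)^k$, so the largest of these $N_k$ terms lies between $N_k^{-1}(\sum_a x_a^p)^k$ and $(\sum_a x_a^p)^k$. Taking $k$-th roots and letting $k \to \iy$, the polynomial factors $N_k^{\pm 1/k} \to 1$ disappear while the middle term is exactly $\|x\|$, giving $\|x\| = (\sum_a x_a^p)^{1/p} = \|x\|_p$ (and $\|x\| = \max_a x_a = \|x\|_\iy$ when $p = \iy$). Absoluteness then removes the sign restriction, so $\|x\| = \|x\|_p$ for all $x \in \coo$. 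This last step is where the large-deviation philosophy enters — the dominant type is the Gibbs-optimal empirical distribution $q_a \propto x_a^p$ — and is the step I expect to require the most care, since it is precisely what forces the single exponent $p$ read off from the flat vectors to govern the norm of every vector.
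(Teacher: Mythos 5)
Your proof is correct, and while the preliminaries match the paper exactly, the key step takes a genuinely different route. The shared part: unconditionality by tensoring with $(1,-1)$, monotonicity via convex combinations of sign-flips, and $\|{\bf 1}^n\|=n^{1/p}$ on flat vectors (the paper cites Howe's theorem for the power law and derives $p\geq 1$ from the subadditivity $u_{n+k}\leq u_n+u_k$, while you prove the power law inline by sandwiching $2^a\leq n^b<2^{a+1}$ and get $p\geq 1$ from $f(n)\leq n$; both work, though you should treat $f(2)=1$, i.e.\ $p=\iy$, separately before taking logarithms). Where you diverge: the paper uses a level-set decomposition, counting via Cram\'er's large deviation theorem the coordinates of $x^{\otimes n}$ exceeding thresholds $\exp(tn)$, with the lower bound coming from a single flat block, the upper bound from an $\e$-staircase of levels $\exp(t_i n)$, and the Fenchel--Moreau involution needed at the end to convert $\sup_t\,(pt-\Lambda_x^*(t))$ back into $\Lambda_x(p)=\ln\sum_i x_i^p$. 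You instead decompose $x^{\otimes k}$ exactly by types, and your sandwich $\max_\tau \leq \|x^{\otimes k}\| \leq N_k\max_\tau$ collapses through the multinomial identity $\sum_\tau\binom{k}{\tau}\prod_a(x_a^p)^{k_a}=\big(\sum_a x_a^p\big)^k$, the polynomial type count $N_k$ disappearing in the $k$-th root. In effect you re-prove the needed finite-support instance of Cram\'er by the method of types and replace all the Legendre-transform bookkeeping with the multinomial theorem: your route is more elementary and fully self-contained (no citation of Cram\'er, Fenchel--Moreau, or Howe) and avoids the $\e$-staircase entirely, since exact type counts make the two bounds match up to polynomial factors; what the paper's formulation buys is modularity (standard citable theorems) and a statement about counting large coordinates of $x^{\otimes n}$ that transfers directly to the continuous $L_p$ setting. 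The one degenerate point is $p=\iy$, where $\big(\binom{k}{\tau}\prod_a(x_a^p)^{k_a}\big)^{1/p}$ is not literally meaningful, but your parenthetical covers it: the same sandwich with $f\equiv 1$ and $\max_\tau v_\tau=\|x\|_\iy^k$ gives $\|x\|=\|x\|_\iy$ directly.
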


The proof of Theorem \ref{thm:main} is simple and goes as follows. First, the value of $p$ is retrieved by looking at $\|(1,1)\|$. Then, for every $x \in \coo$, the quantity $\|x\|$ is shown to equal $\|x\|_p$ by examining the statistical distribution of large coordinates of the $n$-th tensor power $x^{\otimes n}$ ($n$ large) through Cram\'er's large deviations theorem. We defer the proof to section \ref{sec:proof-discrete}.

\subsection{Continuous case: characterization of $L_p$ norms}

We now formulate a version of Theorem \ref{thm:main} in a continuous setting, in order to characterize $L_p$ norms. Let $(\Omega,\mathcal{F},\mathbf{P})$ be a {\itshape rich} probability space, which means that it is possible to define on it one continuous random variable. This implies that we can define on $\Omega$ an arbitrary number of independent random variables with arbitrary distributions; one can think of $\Omega$ as the interval $[0,1]$ equipped with the Lebesgue measure. A random variable is said to be \emph{simple} if it takes only finitely many values. For a random variable $X : \Omega \to \R$, the $L_p$ normd are defined as
\[ \|X\|_{L_p} = \begin{cases} \left( \E |X|^p \right)^{1/p} & \textnormal{if } 1 \leq p < +\iy, \\ \inf \{ M \st \mathbf{P}(|X|\leq M)=1 \} & \textnormal{if } p = \iy . \end{cases} \]

The $L_p$ norms are rearrangement-invariant (i.e. the norm of a random variable depends only on its distribution) and satisfy the property $\|XY\| = \|X\|\cdot\|Y\|$ whenever $X,Y$ are independent random variables. Note that product of independant random variables correspond to the tensor product in $\coo$. These properties characterize the $L_p$ norms:

\begin{theo} \label{thm:continuous}
Let $(\Omega,\mathcal{F},\mathbf{P})$ be a rich probability space, and let $\mathcal{E}$ be the space of simple random variables. Let $\|\cdot\|$ be a norm on $\mathcal{E}$ with the following properties :
\begin{enumerate}
\item If two random variables $X,Y \in \mathcal{E}$ have the same distribution, then $\|X\|=\|Y\|$,
\item If two random variables $X,Y \in \mathcal{E}$ are independent, then $\|XY\|=\|X\|\cdot\|Y\|$.
\end{enumerate}
Then there exists $p \in [1,+\iy]$ such that $\|X\|=\|X\|_{L_p}$ for every $X \in \mathcal{E}$.
\end{theo}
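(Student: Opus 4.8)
The plan is to deduce Theorem~\ref{thm:continuous} from the discrete Theorem~\ref{thm:main} by transporting the norm on simple random variables to a norm on $\coo$ via \emph{uniform} random variables. To a vector $x=(x_1,\dots,x_n)$ I associate the random variable $U^{(n)}_x$ which is uniform on its $n$ coordinates, i.e.\ takes each value $x_i$ with probability $1/n$ (realized on the rich space $\Omega$). Rearrangement-invariance of the norm then mirrors permutation-invariance, and the product of two independent uniform variables $U^{(n)}_x\cdot U^{(m)}_y$ is uniform on the $nm$ products $x_iy_j$, hence matches the tensor product $x\otimes y$. The one genuine difficulty is that this correspondence collapses scale: appending a zero coordinate to $x$ changes the distribution of $U^{(\cdot)}_x$, so $x\mapsto\|U^{(n)}_x\|$ does \emph{not} descend to a function on $\coo$. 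The heart of the proof is to reintroduce the correct normalization.

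To this end I first record how the norm behaves on indicators. For rational $t\in(0,1]$ let $g(t):=\|A_t\|$, where $A_t$ is the indicator of an event of probability $t$; this is well defined by rearrangement-invariance, and $g(t)>0$, $g(1)=\|\mathbf 1\|=1$. Since the space is rich, two independent events of probabilities $s,t$ exist, and the indicator of their intersection is $A_{st}$; multiplicativity gives the key relation
\[ g(st)=g(s)\,g(t). \]
Next I claim the scaling lemma $\|U^{(n+j)}_x\|=g\!\left(\tfrac{n}{n+j}\right)\|U^{(n)}_x\|$. Indeed, multiplying $U^{(n)}_x$ by an independent indicator of probability $t=\tfrac{n}{n+j}$ produces a variable taking each $x_i$ with probability $1/(n+j)$ and the value $0$ with probability $j/(n+j)$, which is exactly $U^{(n+j)}_{(x,0,\dots,0)}$; multiplicativity then yields the claim.

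These two facts let me define, for $x\in\coo$ supported on the first $n$ coordinates,
\[ \Phi(x):=\frac{\|U^{(n)}_x\|}{g(1/n)}. \]
Using the scaling lemma together with $g(1/(n+j))=g(1/n)\,g(n/(n+j))$ (a case of multiplicativity of $g$), one checks at once that $\Phi(x)$ does not depend on the chosen $n\geq \mathrm{supp}(x)$, so $\Phi$ is well defined on $\coo$. Realizing all the uniform variables on a common uniform index shows $U^{(n)}_{x+y}=U^{(n)}_x+U^{(n)}_y$ pointwise, whence $\Phi$ inherits the triangle inequality, homogeneity and definiteness from $\|\cdot\|$; thus $\Phi$ is a norm. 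Permutation-invariance is immediate, and multiplicativity follows from $\|U^{(n)}_x\,U^{(m)}_y\|=\|U^{(n)}_x\|\,\|U^{(m)}_y\|$ divided by $g(1/(nm))=g(1/n)g(1/m)$. Theorem~\ref{thm:main} therefore applies and gives $\Phi=\|\cdot\|_p$ for some $p\in[1,+\iy]$.

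It remains to unwind the definition. Applying $\Phi=\|\cdot\|_p$ to the all-ones vector $\mathbf 1_n\in\R^n$, for which $U^{(n)}_{\mathbf 1_n}$ is the constant $1$, gives $1/g(1/n)=\|\mathbf 1_n\|_p=n^{1/p}$, so $g(1/n)=n^{-1/p}$ and, by multiplicativity of $g$, $g(t)=t^{1/p}$ for every rational $t\in(0,1]$. Consequently, for a simple variable $X$ with rational probabilities, written distributionally as $U^{(m)}_v$ after clearing denominators, $\|X\|=g(1/m)\,\Phi(v)=m^{-1/p}\|v\|_p=\big(\E|X|^p\big)^{1/p}=\|X\|_{L_p}$ (with the obvious reading when $p=+\iy$). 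Finally, an arbitrary simple variable is approximated in distribution by ones with rational probabilities obtained by perturbing the weights on the same values; since the two variables can be coupled to differ only on an event of small probability, continuity of both $\|\cdot\|$ and $\|\cdot\|_{L_p}$ upgrades the identity to all of $\mathcal{E}$. The main obstacle throughout is the scale-collapse noted above; once the multiplicative correction $g(1/n)$ is identified, every remaining step is a routine verification.
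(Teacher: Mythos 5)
Your construction is, step for step, the one the paper itself uses: your normalizer $g(1/n)$ is exactly the paper's $\|B_n\|$ (the norm of a Bernoulli variable of parameter $1/n$), your scaling lemma is the paper's well-definedness check via multiplication by independent indicators, the verification that the induced norm on $\coo$ is permutation-invariant and multiplicative is the same, and the identification $g(1/n)=n^{-1/p}$ via an all-ones vector is the paper's computation. Up to and including the rational-weight case, your argument is correct and essentially identical to the paper's.

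The final step, however, contains a genuine gap. You invoke ``continuity of $\|\cdot\|$'' to pass from rational weights to all of $\mathcal{E}$, but no continuity of the abstract norm with respect to perturbations of the distribution is among the hypotheses, and it does not follow from what you have established. Concretely, coupling $X$ with a rational-weight $X'$ so that they differ on an event of probability $\e$ gives, via the triangle inequality, only $\bigl| \|X\|-\|X'\| \bigr| \le \|X-X'\|$; the variable $X-X'$ is bounded and supported on a small event, but its level sets $\{X-X'=v\}$ generically have \emph{irrational} probabilities, and your function $g$ is defined only at rational arguments. Nothing you have proved rules out that indicators of small events have large norm: bounding $\|X-X'\|$ requires a monotonicity statement of the form $0\le U\le V\Rightarrow\|U\|\le\|V\|$. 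This is precisely what the paper supplies (Lemma \ref{lem:stochastic-domination}, proved by the sign-flip convex hull argument of Lemma \ref{lem:unconditional}) before sandwiching $X$ between rational-weight variables $Y_n\le X\le Z_n$. Your gap is fillable without the full lemma --- for instance, prove $\|X\|=\bigl\| |X| \bigr\|$ by multiplying with an independent Rademacher variable, deduce $\|\mathbf{1}_E\|=\bigl\|\tfrac12(1+R)\bigr\|\le 1$ for every event $E$ (where $R=2\,\mathbf{1}_E-1$ satisfies $|R|=1$), and then $\|\mathbf{1}_E\|\le g(t)=t^{1/p}$ for any rational $t\ge\P(E)$ by realizing $\mathbf{1}_E$ in distribution as a product of two independent indicators --- but as written the concluding appeal to continuity is unjustified, and it occurs exactly at the point where the paper proves an additional lemma.
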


We prove Theorem \ref{thm:continuous} in Section \ref{sec:cont}. We will derive Theorem \ref{thm:continuous} as a consequence of Theorem \ref{thm:main}. Alternatively one could prove it by mimicking the proof of Theorem \ref{thm:main}.

\section{The case of $\ell_p$ norms: proof of Theorem \ref{thm:main}} \label{sec:proof-discrete}

Let $\|.\|$ be a norm on $\coo$ which is permutation-invariant and multiplicative.

\subsection*{STEP 1} We first show that the norm of an element of $\coo$ depends only on the absolute values of its coordinates.

\begin{lemma}\label{lem:unconditional}
A norm on $\coo$ which is permutation-invariant and multiplicative is also {\bf unconditional}: if $x,y \in \coo$ have coordinates
with equal absolute values ($|x_i|=|y_i|$ for every $i$), then $\|x\|=\|y\|$. As a consequence, if $a,b \in \coo$ and
$0 \leq a \leq b$ (coordinatewise), then $\|a\| \leq \|b\|$.
\end{lemma}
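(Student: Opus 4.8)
The plan is to establish unconditionality first and then to read off the monotonicity statement as a soft consequence of unconditionality together with the triangle inequality. The idea for unconditionality is that flipping the signs of a subset of the coordinates of $x$ is an operation that becomes invisible after tensoring with a cleverly chosen fixed vector and then forgetting the order of the coordinates; permutation-invariance and multiplicativity then do the rest.

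Concretely, I would fix the auxiliary vector $c=(1,-1)\in\coo$ and examine the coordinate multiset of $x\otimes c$ for an arbitrary $x\in\coo$. Since the two coordinates of $c$ are $1$ and $-1$, this multiset is exactly $\{x_i : i\}\uplus\{-x_i : i\}$, which is symmetric under global negation. Now suppose $x,y\in\coo$ satisfy $|x_i|=|y_i|$ for every $i$, i.e. $y_i\in\{x_i,-x_i\}$ coordinatewise. Then for each index $i$ the unordered pair $\{y_i,-y_i\}$ coincides with $\{x_i,-x_i\}$, and collecting over all $i$ shows that $x\otimes c$ and $y\otimes c$ have the same coordinate multiset; that is, they are equal up to a permutation of their coordinates. (The bijection $\N^*\cong\N^*\times\N^*$ implicit in $\otimes$ is irrelevant here precisely because the norm is permutation-invariant.) Permutation-invariance gives $\|x\otimes c\|=\|y\otimes c\|$, and multiplicativity turns this into $\|x\|\cdot\|c\|=\|y\|\cdot\|c\|$. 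Since $c\neq 0$ a norm must satisfy $\|c\|>0$, so dividing yields $\|x\|=\|y\|$. This is the heart of the argument, and the only genuinely nonobvious point is the choice $c=(1,-1)$, made exactly so that tensoring symmetrizes the sign pattern; everything else is bookkeeping.

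For the monotonicity consequence, assume $0\le a\le b$ coordinatewise. Because the supports are finite it suffices to treat the case where $a$ and $b$ differ in a single coordinate, say the first, with $0\le a_1\le b_1$; if $b_1=0$ then $a_1=0$ and there is nothing to prove, so assume $b_1>0$. Let $b'$ be $b$ with the sign of its first coordinate reversed, so unconditionality gives $\|b'\|=\|b\|$. Setting $\mu=\tfrac12\bigl(1+a_1/b_1\bigr)\in[\tfrac12,1]$, one checks coordinatewise that $a=\mu b+(1-\mu)b'$, whence the triangle inequality yields $\|a\|\le \mu\|b\|+(1-\mu)\|b'\|=\|b\|$. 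Iterating over the finitely many coordinates on which $a$ and $b$ differ (the intermediate vectors remaining nonnegative throughout) gives $\|a\|\le\|b\|$ in general. I anticipate no real obstacle in this part, the substance of the lemma being concentrated entirely in the tensoring trick of the previous paragraph.
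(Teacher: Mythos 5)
Your proof is correct and follows essentially the same route as the paper: tensoring with the fixed vector $(1,-1)$ to symmetrize the sign pattern, then invoking permutation-invariance and multiplicativity. For the monotonicity part, the paper writes $a$ in one shot as a convex combination of the $2^n$ sign-flipped vectors $\{(\e_i b_i)\}$, and your coordinate-at-a-time averaging $a=\mu b+(1-\mu)b'$ is just an explicit iterated implementation of that same convexity argument---if anything, it supplies the elementary verification the paper leaves implicit.
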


\begin{proof}
If $x,y$ have coordinates with equal absolute values, then the vectors $x \otimes (1,-1)$ and $y \otimes (1,-1)$ are equal up to
permutation of their coordinates. Permutation-invariance and multiplicativity imply that $\|x\|=\|y\|$. For the second part of the lemma, note that $0 \leq a \leq b$ implies that $a$ belongs to the convex hull of the vectors $\{ (\e_i b_i) ; \e_i = \pm 1 \}$ and use the triangle inequality to conclude.
\end{proof}

\begin{rk}
In the literature, unconditional and permutation-invariant norms are sometimes called \emph{symmetric norms}.
\end{rk}

\subsection*{STEP 2} We now focus on sequences whose nonzero coefficients are equal to $1$. We write ${\bf 1}^n$ for the sequence formed with $n$ 1's followed by infinitely many zeros and we put $u_n = \|{\bf 1}^n\|$. By Lemma \ref{lem:unconditional}, the sequence $(u_n)_n$ is non-decreasing. Moreover, the multiplicativity property of the norm implies that the sequence $(u_n)_n$ itself is multiplicative: $u_{kn}=u_k u_n$. It is folklore that a nonzero non-decreasing sequence $(u_n)_n$ such that $u_{kn}=u_ku_n$ must equal $(n^{\alpha})_n$ for some $\alpha \geq 0$ (see \cite{howe} for a proof). We set $p=1/\alpha$ ($p = +\iy$ if $\alpha=0$). By the triangle inequality, $u_{n+k} \leq u_n + u_k$, which implies that $p \geq 1$. At this point we have proved that
\[ \| {\bf 1}^n \| = n^{1/p}. \]

To prove Theorem \ref{thm:main}, we need to show that $\|x\|=\|x\|_p$ for every $x \in \coo$. The case $p=+\iy$ is easily handled, so we may assume that $1 \leq p < +\iy$. By Lemma \ref{lem:unconditional}, without loss of generality, we may also assume that the coordinates of $x$ are non-negative and in non-increasing order. Let $k$ be the number of nonzero coordinates of $x$ ; then $x_i=0$ for $i>k$. We will separately show the inequalities $\|x\| \geq \|x\|_p$ and $\|x\| \leq \|x\|_p$. In both cases, we compare $x^{\otimes n}$ with simpler vectors and apply Cram\'er's theorem (which we now review) to estimate the number of ``large'' coordinates of $x^{\otimes n}$ when $n$ goes to infinity.

\subsection*{Cram\'er's theorem}

Fix $x \in \coo$ with non-negative non-increasing coordinates, and let $k$ be the number of nonzero coordinates of $x$. For $a>0$, let $N(x,a)$ be the number of coordinates of $x$ which are larger than or equal to $a$. To estimate this number, we introduce the convex function $\Lambda_x : \R \to \R$
\[ \Lambda_x(\lambda) = \ln \left( \sum_{i=1}^k x_i^\lambda \right) \]
and its convex conjugate $\Lambda_x^* : \R \to \R \cup \{+\iy\}$
\[ \Lambda_x^*(t) = \sup_{\lambda \in \R} \lambda t -\Lambda_x(\lambda) .\]
The Fenchel--Moreau theorem (see e.g. \cite{brezis}) implies that convex conjugation is an involution:  we have,  for any $\lambda \in \R$,
\[ \Lambda_x(\lambda) = \sup_{t \in \R} \lambda t - \Lambda_x^*(t). \]

\begin{pr}[Cram\'er's large deviation theorem] \label{prop:Cramer}
Let $x \in \coo$ such that $x_i >0$ for $1 \leq i \leq k$ and $x_i=0$ for $i>k$. Let $t$ be a real number such that
$\exp(t) \leq \|x\|_{\iy}$. Then,
\[ \lim_{n \to \iy} \frac{1}{n} \ln N(x^{\otimes n}, \exp(tn)) =  \left.\begin{cases} \ln k & \textnormal{if } \exp(t) \leq (\prod_{i=1}^k x_i)^{1/k}  \\  -\Lambda_x^*(t) & \textnormal{otherwise} \end{cases}\right\}  \geq -\Lambda_x^*(t).\]
\end{pr}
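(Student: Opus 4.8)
The plan is to recognize $N(x^{\otimes n}, \exp(tn))$ as a large-deviation probability for a sum of i.i.d.\ random variables and then invoke Cram\'er's theorem. The coordinates of $x^{\otimes n}$ are indexed by tuples $(i_1,\dots,i_n) \in \{1,\dots,k\}^n$ (every coordinate touching a vanishing $x_i$ is zero), and the coordinate attached to such a tuple equals $\prod_{j=1}^n x_{i_j}$. Taking logarithms, the condition $\prod_j x_{i_j} \geq \exp(tn)$ becomes $\frac1n \sum_{j=1}^n \ln x_{i_j} \geq t$. Hence, letting $Y_1,\dots,Y_n$ be i.i.d.\ random variables each uniformly distributed on $\{\ln x_1,\dots,\ln x_k\}$, I would write $N(x^{\otimes n},\exp(tn)) = k^n\,\P\big(\tfrac1n\sum_{j=1}^n Y_j \geq t\big)$, since there are $k^n$ tuples in total, each carrying mass $k^{-n}$.

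Next I would record the quantities attached to $Y$. Its cumulant generating function is $\ln\E[e^{\lambda Y}] = \ln\big(\frac1k\sum_{i=1}^k x_i^\lambda\big) = \Lambda_x(\lambda)-\ln k$, so its Cram\'er rate function (the Legendre transform) is $\Lambda_x^*(t)+\ln k$. Its mean is $\E Y = \frac1k\sum_i \ln x_i = \ln\big((\prod_i x_i)^{1/k}\big)$, the logarithm of the geometric mean, and its essential supremum is $\ln x_1 = \ln\|x\|_\iy$. The two cases in the statement are then exactly the two sides of the mean: the threshold $\exp(t)\leq(\prod_i x_i)^{1/k}$ is precisely $t\leq \E Y$. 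Applying Cram\'er's theorem to the upper tail $\P(\frac1n\sum Y_j\geq t)$, when $t>\E Y$ the event is a genuine deviation and the rate is $-(\Lambda_x^*(t)+\ln k)$, so $\frac1n\ln N \to \ln k - (\Lambda_x^*(t)+\ln k) = -\Lambda_x^*(t)$; when $t\leq \E Y$ the law of large numbers forces $\P(\frac1n\sum Y_j\geq t)\to 1$ (or to a positive constant at $t=\E Y$), the logarithmic contribution vanishes, and $\frac1n\ln N\to\ln k$. The hypothesis $\exp(t)\leq\|x\|_\iy$ guarantees $t$ does not exceed the essential supremum of $Y$, so the event is nonempty for every $n$ (the constant tuple $(1,\dots,1)$ always qualifies) and the logarithm stays finite; this is exactly what keeps the right-hand side from collapsing to $-\iy$.

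If I wanted a self-contained argument rather than citing Cram\'er, the upper bound would come from the Chernoff--Markov estimate: summing $\big(\prod_j x_{i_j}\big)^\lambda\exp(-\lambda tn)$ over all tuples gives $N(x^{\otimes n},\exp(tn))\leq \inf_{\lambda\geq 0}\exp\big(n(\Lambda_x(\lambda)-\lambda t)\big)$, and because $\Lambda_x'(0)=\E Y$ the supremum of $\lambda t-\Lambda_x(\lambda)$ over $\lambda\geq 0$ coincides with the unconstrained $\Lambda_x^*(t)$ exactly when $t\geq\E Y$ (below the mean this bound degenerates to the trivial $N\leq k^n$). The matching lower bound is the delicate point, and I expect it to be the main obstacle: it requires an exponential change of measure, replacing the uniform law on $\{1,\dots,k\}$ by the tilted Gibbs law $q_i \propto x_i^{\lambda^*}$ with $\lambda^*$ chosen so that $\sum_i q_i\ln x_i = t$, and then counting the tuples whose empirical frequencies are close to $q$. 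Since $Y$ is finitely valued, the cleanest route is the method of types: the number of tuples of empirical type $P$ is $\exp(nH(P)+o(n))$ with $H$ the entropy, and maximizing $H(P)$ subject to $\sum_i P_i\ln x_i\geq t$ yields $\ln k$ (uniform $P$) when the constraint is slack and the Gibbs value $-\Lambda_x^*(t)$ when it is active.

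Finally, the closing inequality $\geq -\Lambda_x^*(t)$ is immediate: it is an equality in the second case, while in the first case $-\Lambda_x^*(t)\leq \ln k$ follows by taking $\lambda=0$ in the definition of $\Lambda_x^*$, which gives $\Lambda_x^*(t)\geq -\Lambda_x(0) = -\ln k$.
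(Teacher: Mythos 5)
Your proposal is correct and follows essentially the same route as the paper: both reduce $N(x^{\otimes n},\exp(tn))$ to the upper-tail probability $k^n\,\P\bigl(\tfrac1n\sum Y_j\geq t\bigr)$ for i.i.d.\ variables uniform on $\{\ln x_1,\dots,\ln x_k\}$, apply the standard Cram\'er theorem with cumulant generating function $\Lambda_x(\lambda)-\ln k$, and obtain the closing inequality from $\Lambda_x^*(t)\geq-\ln k$ (i.e.\ $\lambda=0$). Your supplementary Chernoff/method-of-types sketch is a reasonable self-contained alternative, but it is not needed and the paper simply cites Cram\'er's theorem as you do in your main argument.
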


\begin{proof}
To see how Proposition \ref{prop:Cramer} follows from the standard formulation of Cram\'er's theorem, let $(X_n)$ be independent random variables with common distribution given by
\[ \frac{1}{k} \sum_{i=1}^k \delta_{\ln x_i} . \]
Then $\P( \frac{1}{n} ( X_1 +\dots + X_n ) \geq t) = \frac{1}{k^n} N(x^{\otimes n},\exp(tn))$. The usual Cram\'er theorem (see any probability textbook, or \cite{cerf-petit} for a short proof)  asserts that
\[ \lim_{n \to \iy} \frac{1}{n} \ln \P \left( \frac{1}{n} ( X_1 +\dots + X_n ) \geq t\right) = \begin{cases} 0 & \textnormal{if } t \leq \E X_1  \\  \displaystyle -\sup_{\lambda \in \R} \left( \lambda t - \ln \E e^{\lambda X_1} \right)& \textnormal{otherwise. } \end{cases} \]
This is equivalent to the equality in Proposition \ref{prop:Cramer}. The last inequality follows easily since $\Lambda_x^*(t) \geq -\ln k$ for every real $t$.
\end{proof}

We now complete the proof of the main theorem by comparing $x^{\otimes n}$ with simpler vectors, as shown in Figure \ref{fig:x-otimes-n}.

\begin{figure}[htbp]
\centering
\includegraphics{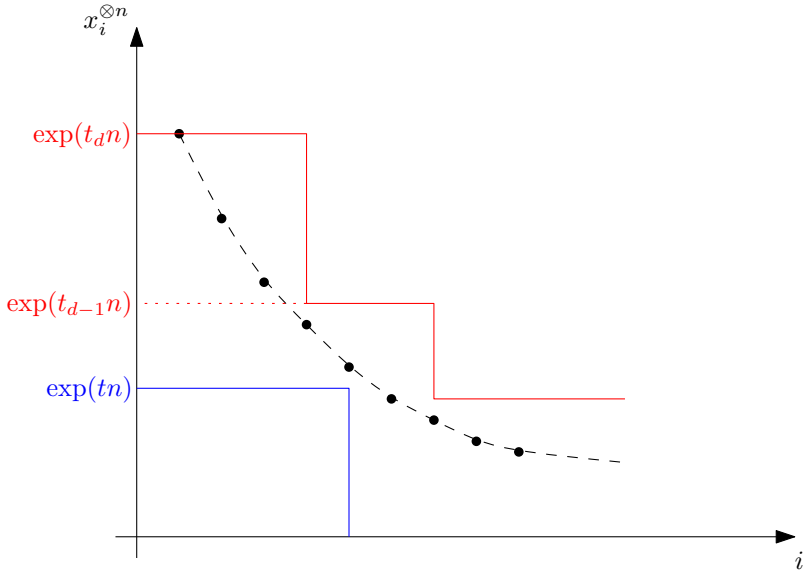}
\caption{Bounding the vector $x^{\otimes n}$ by vectors with simpler profiles. The coordinates of the tensor power $x^{\otimes n}$ are represented by dark circles, the vector used in for the lower bound has only one non-zero value $\exp(tn)$ and the upper-bounding vector has values $\exp(t_d n) \geq \cdots \geq \exp(t_1 n) \geq 0$.}
\label{fig:x-otimes-n}
\end{figure}

\subsection*{STEP 3: the lower bound $\|x\| \geq \|x\|_p$}

For $t \in \R$, we have the lower bound
\[ \|x\| = \| x^{\otimes n} \|^{1/n} \geq \| \exp(tn) {\bf 1}^{N(x^{\otimes n},\exp(tn))} \|^{1/n} = \exp(t) N(x^{\otimes n},\exp(tn))^{1/np} .\]
Proposition \ref{prop:Cramer} asserts that
\[ \lim_{n \to \iy} N(x^{\otimes n},\exp(tn))^{1/n} \geq \exp(-\Lambda_x^*(t)) .\]
We have therefore
\[ \|x\| \geq  \exp(t-\Lambda_x^*(t)/p) = \exp(pt-\Lambda_x^*(t))^{1/p} \]
for any $t \in \R$. Taking the supremum over $t$ and using the Fenchel--Moreau theorem shows that
\[ \|x\| \geq \exp(\Lambda_x(p))^{1/p} = \|x\|_p .\]

\subsection*{STEP 4: The upper bound $\|x\| \leq \|x\|_p$}

Fix $\e >0$ and choose $t_0<\dots<t_d$ such that
\[ \exp(t_0) = \min_{1 \leq i \leq k} x_k, \ \  \exp(t_1) = \left(\prod_{i=1}^k x_i\right)^{1/k}, \ \ \exp(t_d)=\|x\|_{\iy} \ \textnormal{ and } \ \sup_{2\leq i \leq d} |t_i-t_{i-1}| < \e.
\]
For $n \in \N^*$, we define a vector $y_n \in \coo$ as follows: the coordinates of $y_n$ belong to the set
\[ \{0,\exp(nt_1),\exp(nt_2),\dots,\exp(nt_d)\} \]
and are minimal possible such that the inequality $x^{\otimes n} \leq y_n$ holds coordinatewise. Lemma \ref{lem:unconditional} implies that $\|x^{\otimes n}\| \leq \| y_n\|$. On the other hand, for $1 \leq i \leq d$, the number of coordinates of $y_n$ which are equal to $\exp(nt_i)$ is less than $N(x^{\otimes n}, \exp(nt_{i-1}))$. The triangle inequality implies that
\begin{eqnarray*}
\| y_n \| &\leq& \sum_{i=1}^d \left\| \exp(t_in) {\bf 1}^{N(x^{\otimes n},\exp(t_{i-1}n))} \right\| \\
& \leq & \sum_{i=1}^d \exp(t_in) N(x^{\otimes n},\exp(t_{i-1}))^{1/p} \\
& \leq & d \max_{1 \leq i \leq d} \left\{ \exp(t_i n) N(x^{\otimes n},\exp(t_{i-1}n))^{1/p} \right\}.
\end{eqnarray*}
This gives an upper bound for $\|x\|$
\begin{equation} \label{eq:upperbound}
  \|x\| = \|x^{\otimes n}\|^{1/n} \leq \|y_n\|^{1/n} \leq d^{1/n} \max_{1 \leq i \leq d} \left\{ \exp(t_i) N(x^{\otimes n},\exp(t_{i-1}n))^{1/np} \right\}.
\end{equation}
For $2 \leq i \leq d$, Proposition \ref{prop:Cramer} implies that
\begin{eqnarray*}
\lim_{n \to \iy} \exp(t_i) N(x^{\otimes n},\exp(t_{i-1}n))^{1/np} & = & \exp(t_i) \exp(- \Lambda_x^*(t_{i-1}) )^{1/p}\\
& \leq & \exp(t_i) \exp( -(pt_{i-1} - \Lambda_x(p)) )^{1/p} \\
& \leq & \exp(\e) \|x\|_p.
\end{eqnarray*}
Similarly, for $i=1$,
\[ \exp(t_1) N(x^{\otimes n},\exp(t_0n))^{1/np} \leq \exp(t_1) k^{1/p} \leq \|x\|_p,\]
where the last inequality follows from the inequality of arithmetic and geometric means. Therefore, taking the limit $n \to \iy$ in inequality \eqref{eq:upperbound} implies that $\|x\| \leq \exp(\e) \|x\|_p$, and the result follows when $\e$ goes to $0$.

\section{The case of $L_p$ norms: proof of Theorem \ref{thm:continuous}}\label{sec:cont}

Let $\|\cdot\|$ be a norm on the space $\mathcal{E}$ of simple random variables which satisfies the hypotheses of Theorem \ref{thm:continuous}. Throughout the proof, we denote by $B_n \in \mathcal{E}$ a Bernoulli random variable with parameter $1/n$, i.e. such that $\P(B_n=1)=1/n$ and $\P(B_n=0)=1-1/n$. Moreover, we assume that the random variables $(B_n)_{n \in \N}$ are independent.

We will define a norm $|||\cdot|||$ on $\coo$ which will satisfy the hypotheses of Theorem \ref{thm:main}. It is convenient to identify $\coo$ with the union of an increasing sequence of subspaces
\begin{equation} \label{eqn:inductive-limit} \coo = \bigcup_{n \in \N} \R^n .\end{equation}
For $x = (x_1,\dots,x_n) \in \R^n$, we define
\[ |||x||| = \frac{\|X\|}{\|B_n\|} ,\]
where $X \in \mathcal{E}$ is a random variable with distribution $\frac{1}{n} (\delta_{x_1} + \cdots + \delta_{x_n})$. 

This defines a norm on $\coo$ provided the construction is compatible with the union \eqref{eqn:inductive-limit}. To check this, consider $x$ as an element of $\R^m$ for $m>n$, obtained by padding $x$ with $m-n$ zeros. Let $X'$ be a random variable with distribution $\frac{1}{m} (\delta_{x_1} + \cdots + \delta_{x_n} + (m-n)\delta_0)$. If we moreover assume that the random variables $X,X',B_n,B_m$ are independent, it is easily checked that $XB_m$ and $X'B_n$ both have the distribution $\frac{1}{nm} (\delta_{x_1} + \cdots + \delta_{x_n})  + (1-\frac{1}{nm})\delta_0$. By the hypotheses on the norm, this implies that $\|X\|\cdot\|B_m\|= \|X'\| \cdot \|B_n\|$ and therefore
\[ \frac{\|X\|}{\|B_n\|} = \frac{\|X'\|}{\|B_m\|} .\]
This shows that $|||x|||$ is properly defined for $x \in \coo$. It is easily checked that $|||\cdot|||$ is a norm on $\coo$ which is both permutation-invariant and multiplicative (for the latter, use the fact that $B_nB_m$ and $B_{nm}$ have the same distribution). 

By Theorem \ref{thm:main}, the norm $|||\cdot|||$ equals the norm of $\ell_p$ for some $p \in [1,+\infty]$. To compute $\|B_n\|$, consider the vector $x \in \R^{2n}$ given by $n$ $1$'s followed by $n$ $0$'s. We have 
\[ n^{1/p} = \|x\|_p = |||x||| = \frac{\|B_{2}\|}{\|B_{2n}\|} = \frac{1}{\|B_n\|} ,\]
where the last equality follows from the aforementioned property of Bernoulli random variables. This implies that the equation
\begin{equation} \label{eqn:capital-ellp} \|X\| = \|X\|_{L_p} .\end{equation}
holds for every $X \in \mathcal{E}$ with rational weights, i.e. with distribution $\frac{1}{n}(\delta_{x_1} + \dots + \delta_{x_n})$ for some $n$. The extension to all random variables in $\mathcal{E}$ follows by an approximation argument. Indeed, for every positive random variable $X \in \mathcal{E}$, there exist sequences $(Y_n),(Z_n)$ of positive random variables, with rational weights, such that
\[ Y_n \leq X \leq Z_n \]
and
\[ \lim_{n \to \iy} \|Y_n\|_{L_p} =  \lim_{n \to \iy} \|Z_n\|_{L_p} = \|X\|_{L_p} .\]
Therefore, we may use the following lemma (a continuous version of lemma \ref{lem:unconditional}) to extend formula \eqref{eqn:capital-ellp} to every $X \in \mathcal{E}$.

\begin{lemma} \label{lem:stochastic-domination}
Let $\|\cdot\|$ be a norm on $\mathcal{E}$ which satisfies the hypotheses of Theorem \ref{thm:continuous}. If $X \in \mathcal{E}$, then the random variables $X$ and $|X|$ have the same norm. If $X,Y \in \mathcal{E}$ are two random variables such that $0 \leq X \leq Y$, then $\|X\| \leq \|Y\|$.
\end{lemma}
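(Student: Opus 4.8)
The plan is to follow the two-step structure of the proof of Lemma \ref{lem:unconditional}, replacing the coordinatewise sign flips of the discrete setting by multiplication against an independent symmetric sign, and replacing the finite coordinate set by the (finite) family of level sets of the simple variables involved.

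First I would prove the identity $\|X\| = \||X|\|$. Let $\sigma \in \mathcal{E}$ be a symmetric sign ($\P(\sigma = 1) = \P(\sigma = -1) = 1/2$), chosen independent of $X$; such a variable exists because the space is rich. The variables $X\sigma$ and $|X|\sigma$ are both symmetric and satisfy $|X\sigma| = |X| = \big||X|\sigma\big|$, so they have the same distribution (a symmetric law is determined by the law of its modulus). Rearrangement-invariance then gives $\|X\sigma\| = \||X|\sigma\|$, while the multiplicativity hypothesis, applied to the independent pairs $(X,\sigma)$ and $(|X|,\sigma)$, gives $\|X\sigma\| = \|X\|\cdot\|\sigma\|$ and $\||X|\sigma\| = \||X|\|\cdot\|\sigma\|$. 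Since $\sigma \neq 0$ we have $\|\sigma\| \neq 0$, and dividing yields $\|X\| = \||X|\|$. I would immediately record the consequence that, for every $\{-1,+1\}$-valued $s \in \mathcal{E}$ and every $Y \in \mathcal{E}$, one has $\|sY\| = \|Y\|$, because $|sY| = |Y|$; this is the continuous form of unconditionality.

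For the monotonicity statement I would reproduce the convex-hull argument of Lemma \ref{lem:unconditional}, with the atoms of $(X,Y)$ playing the role of coordinates. Assume $0 \leq X \leq Y$. Since $X$ and $Y$ are simple, the pair $(X,Y)$ takes finitely many values, and $\Omega$ splits into level sets $A_1,\dots,A_m$ on which $(X,Y) \equiv (a_l,b_l)$ with $0 \leq a_l \leq b_l$; write $\theta_l = a_l/b_l \in [0,1]$ (and $\theta_l = 0$ if $b_l = 0$). For each sign pattern $\eta \in \{-1,+1\}^m$ let $s_\eta \in \mathcal{E}$ be the sign field equal to $\eta_l$ on $A_l$, and set $\lambda_\eta = \prod_{l=1}^m \tfrac{1 + \eta_l \theta_l}{2} \geq 0$. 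A direct computation on each $A_l$ (using $\sum_\eta \lambda_\eta \eta_l = \theta_l$) shows that $\sum_\eta \lambda_\eta = 1$ and $\sum_\eta \lambda_\eta\, s_\eta Y = X$. Each variable $s_\eta Y$ satisfies $|s_\eta Y| = Y$, so $\|s_\eta Y\| = \|Y\|$ by the first part, and the triangle inequality gives $\|X\| \leq \sum_\eta \lambda_\eta \|s_\eta Y\| = \|Y\|$.

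The only genuinely delicate point is the one just exploited: the expression of $X$ as a convex combination of sign-flipped copies of $Y$ must be a \emph{finite} combination for the triangle inequality to apply, and this is exactly what the hypothesis that $X$ and $Y$ are simple guarantees, since the partition $A_1,\dots,A_m$ is finite. This is what lets me avoid any appeal to contractivity of conditional expectation, or to majorization theory, which would otherwise be the natural but heavier route to the same inequality in a continuous setting.
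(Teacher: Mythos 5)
Your proof is correct and follows essentially the same route as the paper's: the symmetric sign $\sigma$ is exactly the paper's variable $\e$, and your sign fields $s_\eta$ on the level sets $A_1,\dots,A_m$ are the paper's $Z_\e$ on the partition $(\Omega_1,\dots,\Omega_n)$. The only difference is cosmetic---you write out the convex weights $\lambda_\eta = \prod_l \tfrac{1+\eta_l\theta_l}{2}$ explicitly, where the paper simply invokes the convex-hull fact already used in Lemma \ref{lem:unconditional}.
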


\begin{proof}
To prove the first part, note that if $\e$ is a random variable which is independent from $X$ and such that $\P(\e=1)=\P(\e=-1)=1/2$, then $\e X$ and $\e |X|$ are identically distributed. Assume now that $0 \leq X \leq Y$. There exists a finite measurable partition $(\Omega_1,\dots,\Omega_n)$ of $\Omega$ such that $X$ and $Y$ are constant on each set $\Omega_i$. Let $x_i$ (resp. $y_i$) be the value of $X$ (resp. $Y$) on $\Omega_i$; then $x_i \leq y_i$. For any $\e=(\e_1,\dots,\e_n) \in \{\pm 1\}^n$, one may define a random variable $Z_\e$ by setting $Z_\e(\omega)=\e_i$ for $\omega \in \Omega_i$. The random variable $X$ can be written as a convex combination of the random variables $\{Z_\e Y\}_{\e \in \{\pm 1\}^n}$ (this is a consequence of the fact that $(x_1,\dots,x_n)$ is in the convex hull of $(\pm y_1, \dots,\pm y_n)$---a fact already used in the proof of Lemma \ref{lem:unconditional}). We now conclude by the triangle inequality and the
fact that $\|Z_\e Y\|=\|Y\|$ since both variables are equal in absolute value.
\end{proof}

\section{Extensions}

\subsection{Extension to the complex case}

Theorems \ref{thm:main} and \ref{thm:continuous} extend easily to the complex case. We only state the discrete version. Up to a small detail, the proof is the same as in the real case.

\begin{theo}\label{thm:complex}
Let $\|\cdot\|$ be a permutation-invariant and multiplicative norm on the space of finitely supported complex sequences. Then, there exists some $p \in [1,+\iy]$ such that $\| \cdot \| = \| \cdot\|_p$.
\end{theo}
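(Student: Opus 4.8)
The plan is to reduce Theorem~\ref{thm:complex} to the already-proved real case (Theorem~\ref{thm:main}) by exploiting the same tensoring trick used in Lemma~\ref{lem:unconditional}, now with complex roots of unity playing the role that $(1,-1)$ played in the real setting. First I would establish a complex analogue of unconditionality: I claim that for $x,y$ in the space of finitely supported complex sequences with $|x_i|=|y_i|$ for every $i$, one has $\|x\|=\|y\|$. The key observation is that if $\omega=e^{2\pi i/m}$ is a primitive $m$-th root of unity and $w=(1,\omega,\omega^2,\dots,\omega^{m-1})$, then for any unimodular complex number $\zeta$ the sequences $x\otimes w$ and $(\zeta x)\otimes w$ agree up to permutation of coordinates (multiplying by $\zeta$ merely cyclically rotates the phases $\omega^j$, which is a permutation when $\zeta$ is itself a power of $\omega$). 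By permutation-invariance and multiplicativity, $\|x\|\cdot\|w\|=\|\zeta x\|\cdot\|w\|$, hence $\|x\|=\|\zeta x\|$ for every root-of-unity phase $\zeta$, and by density of roots of unity on the unit circle together with the continuity of the norm, for every unimodular $\zeta$. Applying this coordinatewise gives $\|x\|=\|y\|$ whenever $|x_i|=|y_i|$.

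Once unconditionality holds, the norm of a complex sequence $x$ depends only on the sequence $(|x_i|)_i$ of absolute values. I would therefore define an induced norm on $\coo$ (real finitely supported sequences) by restricting $\|\cdot\|$ to sequences with non-negative real entries, and observe that this restriction is itself permutation-invariant and multiplicative: permutation-invariance is inherited directly, and multiplicativity follows because the tensor product of two non-negative real sequences is again non-negative and real, so the complex multiplicativity hypothesis applies verbatim. By Theorem~\ref{thm:main}, this restricted norm coincides with $\|\cdot\|_p$ for some $p\in[1,+\iy]$ on all non-negative real sequences.

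Finally I would transfer this back to arbitrary complex sequences. Given any $x$ in the complex space, let $\tilde{x}=(|x_i|)_i$ be its sequence of moduli, which is a non-negative real sequence. The complex unconditionality established in the first step gives $\|x\|=\|\tilde{x}\|$, while Theorem~\ref{thm:main} applied to the restriction gives $\|\tilde{x}\|=\|\tilde{x}\|_p=\|x\|_p$, where the last equality holds because the $\ell_p$ norm of a complex sequence depends only on the moduli of its entries. This chains to $\|x\|=\|x\|_p$ for every complex $x$, completing the proof.

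The main obstacle---the ``small detail'' alluded to in the excerpt---is the complex unconditionality step, and specifically verifying that multiplying a single coordinate by a unimodular phase can be realized as a permutation after tensoring with a root-of-unity vector $w$. The cleanest way is to take the phase $\zeta$ to be a root of unity $\omega^\ell$ first, where the cyclic rotation of $(1,\omega,\dots,\omega^{m-1})$ is manifestly a permutation, and only afterwards pass to arbitrary unimodular $\zeta$ by approximation using the continuity of the norm. One should be slightly careful that the approximation argument is applied to a fixed finitely supported sequence so that only finitely many coordinates are being adjusted, which keeps everything within $\coo$ and makes the continuity argument elementary.
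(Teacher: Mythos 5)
Your proposal is correct and follows essentially the same route as the paper: the published proof of Theorem~\ref{thm:complex} consists precisely of your roots-of-unity modification of Lemma~\ref{lem:unconditional} (tensoring with $(1,\omega,\dots,\omega^{k-1})$ so that coordinatewise root-of-unity phase changes become permutations, then passing to arbitrary unimodular phases by continuity on a finite-dimensional subspace), after which the real argument is rerun \emph{mutatis mutandis}---your explicit reduction to Theorem~\ref{thm:main} by restriction is just a tidy packaging of that step. One small wording fix: the restriction should be taken on the space of \emph{all} real finitely supported sequences (a vector space, so that Theorem~\ref{thm:main} applies), not merely those with non-negative entries; everything else, including your care to realize single-coordinate phase rotations as block-cyclic permutations of $x \otimes (1,\omega,\dots,\omega^{k-1})$, matches the paper's argument.
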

\begin{proof}
We argue in the same way as we did for real sequences. The proof adapts \emph{mutatis mutandis}, except for the first part of Lemma \ref{lem:unconditional} whose proof requires a slight modification. Let $\omega$ be a primitive $k$-th root of unity. If the coordinates of $x$ and $y$ differ only by a power of $\omega$, then the vectors $x \otimes (1,\omega,\dots,\omega^{k-1})$ and $y \otimes (1,\omega,\dots,\omega^{k-1})$ are equal up to permutation of coordinates, and therefore $\|x\|=\|y\|$. The case of a general complex phase follows by continuity.
\end{proof}

\subsection{Noncommutative setting}

Theorem \ref{thm:main} can be formulated to characterize the Schatten $p$-norms.

Let $H$ be a infinite-dimensional (real or complex) separable Hilbert space and $F(H)$ be the space of finite rank operators on $H$. Let $\|\cdot\|$ a norm on $F(H)$ which is {\bfseries unitarily invariant}: whenever $U,V$ are unitary operators on $H$ and $A \in F(H)$, we have
$\| UAV \| = \|A\|$. Assume also that the norm is {\bfseries multiplicative} in the following sense: for any $A,B \in F(H)$,
\[ \| A \otimes B \| = \|A\| \cdot \|B\| .\]
As in the commutative case, we fix a isometry between $H$ and the Hilbertian tensor product $H \otimes H$ to define $\|A \otimes B\|$---the particular choice we make is irrelevant because of the unitary invariance. The next theorem asserts that the only norms which are unitarily invariant and multiplicative are the Schatten $p$-norms defined as $\|A\|_p=(\Tr |A|^p)^{1/p}$ for $1 \leq p < +\iy$, while $p=\iy$ corresponds to the operator norm.

\begin{theo}\label{thm:Schatten}
Let $\|\cdot\|$ be a norm on the space of finite-rank operators on a infinite-dimensional Hilbert space which is both multiplicative and unitarily invariant. Then, there exists some $p \in [1,+\iy]$ such that $\| \cdot \|$ is the Schatten $p$-norm.
\end{theo}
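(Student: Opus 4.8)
The plan is to reduce Theorem \ref{thm:Schatten} to the commutative case (Theorem \ref{thm:main}) by exploiting the singular value decomposition. The key observation is that any unitarily invariant norm on $F(H)$ depends only on the sequence of singular values of the operator, which reduces matters to a symmetric function of a finitely supported sequence. First I would show that the singular values do the job: if $A \in F(H)$ has singular values $s(A) = (s_1, s_2, \dots) \in \coo$, I define a candidate norm on $\coo$ by $|||x||| = \|A_x\|$, where $A_x$ is any operator whose singular values are the absolute values of the coordinates of $x$ (for instance a diagonal operator in a fixed orthonormal basis). Unitary invariance guarantees this is well defined and independent of the choice of $A_x$, and it is immediate that $|||\cdot|||$ is permutation-invariant, since permuting the singular values is achieved by conjugating with a permutation unitary.

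The main step is to verify that $|||\cdot|||$ is multiplicative on $\coo$. This follows from the multiplicativity hypothesis on $\|\cdot\|$ together with the fact that the singular values of a tensor product are the products of the singular values: if $A$ and $B$ are finite rank with singular values $(s_i)$ and $(t_j)$, then $A \otimes B$ has singular values $(s_i t_j)_{(i,j)}$. Choosing diagonal operators $A_x, A_y$ realizing the coordinates of $x,y$, the operator $A_x \otimes A_y$ is again diagonal (up to the fixed isometry $H \cong H \otimes H$ and a permutation unitary) with singular values the entries of $x \otimes y$. Hence $|||x \otimes y||| = \|A_x \otimes A_y\| = \|A_x\| \cdot \|A_y\| = |||x||| \cdot |||y|||$, so $|||\cdot|||$ satisfies both hypotheses of Theorem \ref{thm:main}. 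I would then invoke Theorem \ref{thm:main} to conclude that $|||x||| = \|x\|_p$ for some $p \in [1,+\iy]$, which means precisely that $\|A\| = (\sum_i s_i(A)^p)^{1/p} = \|A\|_p$ for $1 \leq p < +\iy$, and $\|A\| = s_1(A) = \|A\|_\iy$ for $p = \iy$, i.e. $\|\cdot\|$ is the Schatten $p$-norm.

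The only genuine subtlety (and the step I expect to require the most care) is checking that the passage through the isometry $H \cong H \otimes H$ really sends the diagonal operator with singular values $x \otimes y$ to something unitarily equivalent to $A_x \otimes A_y$; this is where one uses that the fixed isometry and the ambient unitary invariance let us ignore the bookkeeping of the bijection between $\N^*$ and $\N^* \times \N^*$, exactly as in the commutative argument. One also needs the elementary but essential linear-algebra fact that $s(A \otimes B) = s(A) \otimes s(B)$, which follows from $|A \otimes B| = |A| \otimes |B|$ and the multiplicativity of eigenvalues under tensor products of positive operators. Everything else transfers verbatim from the $\ell_p$ case once unitary invariance is recognized as the noncommutative analogue of the combination of permutation-invariance and unconditionality.
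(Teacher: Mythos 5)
Your proof is correct and follows the same overall route as the paper: reduce to the commutative Theorem \ref{thm:main} by passing to singular values. The difference is one of packaging. The paper disposes of the reduction in two lines by invoking von Neumann's theorem (Bhatia, Theorem IV.2.1): a norm on $F(H)$ is unitarily invariant if and only if it has the form $\|s(\cdot)\|$ for some symmetric norm on $\coo$; Theorem \ref{thm:main} (or Theorem \ref{thm:complex}) then finishes. You instead re-derive by hand exactly the half of von Neumann's correspondence that is needed: you build $|||\cdot|||$ from diagonal operators, obtain well-definedness and permutation-invariance from unitary invariance, and verify multiplicativity via $s(A \otimes B) = s(A) \otimes s(B)$ --- a fact the paper's citation also silently relies on when transferring multiplicativity to the gauge, so making it explicit is a virtue. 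Two small points to tighten in your version: (i) for the triangle inequality of $|||\cdot|||$ you should take $A_x = \mathrm{diag}(x)$ with the \emph{signed} entries, so that $x \mapsto A_x$ is linear, and then note $\|\mathrm{diag}(x)\| = \|\mathrm{diag}(|x|)\|$ by multiplying with a diagonal sign (or phase) unitary; (ii) well-definedness --- that any two finite-rank operators with the same singular values have the same norm, which is also what gives $\|A\| = |||s(A)|||$ for arbitrary $A \in F(H)$ --- uses that the partial isometries in the singular value decomposition extend to unitaries of $H$, and this is precisely where infinite-dimensionality (the orthocomplements have equal infinite dimension) enters. With these remarks your argument is a self-contained substitute for the von Neumann citation at the cost of a page of routine verification, while the paper's version is shorter but outsources exactly these checks.
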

\begin{proof}
By a result of von Neumann 
(see \cite{bhatia}, Theorem IV.2.1),
a norm $N$ on $F(H)$ is unitarily invariant if and only if $N(\cdot)=\|s(\cdot)\|$ for some symmetric norm $\|.\|$ on $\coo$---here $s(A) \in \coo$ denotes the list of singular values of an operator $A \in F(H)$. The result follows then from the commutative case (Theorem \ref{thm:main} or Theorem \ref{thm:complex}).
\end{proof}


\begin{thebibliography}{10}


\bibitem{handbook-ellp}
Alspach, D. and Odell, E. {\it $L_p$ spaces.}
 Handbook of the geometry of Banach spaces, Vol. I,
 123--159, North-Holland, Amsterdam,  2001.


\bibitem{an2}
Aubrun, G. and Nechita, I.
{\it Stochastic domination for iterated convolutions and catalytic majorization.}
Ann. Inst. H. Poincar\'e Probab. Statist. Volume 45, Number 3 (2009), 611--625.


\bibitem{bhatia}
Bhatia, R. {\it Matrix analysis}. Graduate Texts in Mathematics {\bf 169}. Springer-Verlag, 1997.

\bibitem{brezis}
Br\'ezis, H. {\it Analyse fonctionnelle. Th\'eorie et applications.} (French), Masson, Paris, 1983.

\bibitem{cerf-petit}
Cerf, R. and Petit, P. {\it A short proof of Cram\'er's theorem}, Amer. Math. Monthly (2011, to appear).


\bibitem{howe}
Howe, E.
{\it A new proof of {E}rd{\H o}s' theorem on monotone multiplicative functions.}
Amer. Math. Monthly {\bfseries 93} (1986), 593--595.

\bibitem{kuperberg}
Kuperberg, G. {\it The capacity of hybrid quantum memory}, IEEE Trans. Inform. Theory {\bf 49} (2003), 1465--1473.




\end{thebibliography}
\end{document}